\newcommand{\Z}{\mathbb{Z}}
\newcommand{\C}{\mathbb{C}}
\newcommand{\N}{\mathbb{N}}
\def\H{\mathbb{H}}
\newtheorem{theorem}{Theorem}[section]
\newtheorem{lemma}[theorem]{Lemma}
\newtheorem{proposition}[theorem]{Proposition}
\newtheorem*{theorem*}{Theorem}
\newtheorem{remark}[theorem]{Remark}
\numberwithin{equation}{section}
\title{Rank-Crank type PDE's and non-holomorphic Jacobi forms}
\date{\today}
\author{Kathrin Bringmann and Sander Zwegers}
\address{School of Mathematics, University of Minnesota, Minneapolis, MN 55455, U. S. A.}
\address{School of Mathematical Sciences, University College Dublin, Belfield, Dublin 4, Ireland}
\email{bringman@math.umn.edu}
\email{sander.zwegers@ucd.ie}
\thanks{The first author was partially  supported by NSF grant DMS-0757907. Part of this paper was written while the first author was in residence at the 
Max-Planck Institute. She thanks the institute for providing a stimulating environment.}
\begin{document}
\begin{abstract}
In this paper we show how Rank-Crank type PDE's (first found by Atkin and Garvan)  occur naturally in the framework of non-holomorphic Jacobiforms and find an infinite family of such differential equations. As an application we show an infinite family of congruences for odd Durfee symbols,  a partition statistic introduced by George Andrews.
\end{abstract}
\maketitle
\section{Introduction and statement of results}
We recall that a  \textit{partition} of a  nonnegative integer $n$ is a non-increasing 
sequence of positive integers whose sum is $n$, and we  
let  $p(n)$ denote the number of partitions of $n$.  
By Euler, we have  the generating function $(q:=e^{2 \pi i \tau})$
\begin{eqnarray*}
P(q):= \sum_{n=0}^{\infty} p(n)\, q^{n} 
= \frac{q^{\frac{1}{24}}}{\eta(\tau)},
\end{eqnarray*}
where    
$\eta(\tau):=
q^{ \frac{1}{24}}\, \prod_{n=1}^{\infty}(1-q^n)$
 is Dedekind's $\eta$-function. 
 Of the many consequences of the modularity properties of $P(q)$,  some of the most striking are
the Ramanujan-congruences:
\begin{equation} \label{Ramanujan}
\begin{split}
p(5n+4) & \equiv 0 \pmod 5,\\
p(7n+5) & \equiv 0 \pmod 7, \\
p(11n+6) & \equiv 0 \pmod {11}.
\end{split}
\end{equation}

To explain the congruences with modulus $5$ and $7$, Dyson in \cite{Dy}
introduced the \textit{rank} of a partition, which is defined to be
its largest part minus the number of its parts.
He conjectured that the partitions of $5n+4$ (resp. $7n+5$) form
$5$ (resp. $7$) groups of equal size when sorted by their ranks
modulo $5$ (resp. $7$), which was proven by  
Atkin and Swinnerton-Dyer in \cite{AS}.
If $N(m,n)$ denotes the number of partitions of $n$  with rank $m$,
then
we have the generating  function
\begin{eqnarray} \label{RankFunction}
R(z;q):= 1 + \sum_{m \in \Z}
\sum_{n=1}^{\infty}
N(m,n) z^m q^n
=1 + \sum_{n=1}^\infty \frac{q^{n^2}}{(zq)_n (z^{-1}q)_n}
= \frac{(1-z)}{(q)_{\infty}} \sum_{n \in \Z}
\frac{(-1)^n q^{\frac{n}{2}(3n+1) }}{1-zq^n},
\end{eqnarray}
where $(a)_n=(a;q)_{n}:= \prod_{j=0}^{n-1}(1-aq^j)$ and $(a)_{\infty}:=
\lim_{n \to \infty} (a)_n $.
In particular
\begin{eqnarray*}
R(1;q)&=&P(q), \\
R(-1;q)&=&f(q):=1+\sum_{n=1}^{\infty}\frac{q^{n^2}}{(-q)_n^2 }.
\end{eqnarray*}
The function $f$ is one of the \textit{mock theta functions}
defined by Ramanujan in his last letter to Hardy. In \cite{Zw} the second author shows that it can be seen as the holomorphic part of a particular type of real-analytic modular form, now known as a \textit{harmonic Maass form}. Harmonic  Maass forms are generalizations of  modular forms, 
in that they   satisfy    the same transformation law, and (weak) growth conditions at 
cusps, but instead of being holomorphic, they are annihilated 
by the weight $k$ hyperbolic Laplacian.  
In \cite{BO} Ono and the first author then   completed also
$R(\zeta;q)$ for other roots of unity $\zeta$ to  Maass forms. 

Naturally it is of wide interest to find other explicit
examples of Maass forms. For this purpose the first  author in \cite{B1} considered       an interesting partition statistic introduced by  Andrews in  \cite{A1}.   
For this, we define 
  the    \textit{symmetrized $k$-th rank moment function}
\begin{eqnarray}
\eta_k(n):= \sum_{m = - \infty}^{\infty}
\left(
\begin{matrix}
m + \left[\frac{k-1}{2} \right] \\
k
\end{matrix}
\right)
N(m,n).
 \end{eqnarray} 
 Using conjugation of partitions, one can show that $\eta_k(n)=0$ if $k$ is odd, thus we may in the following assume that $k$ is even.  
 For $k\geq 2$ even,  consider  the rank
generating function
\begin{eqnarray} \label{MomentFunction}
R_{k}(q):=
\sum_{n=0}^{\infty} \eta_{k}(n)\,q^n.
\end{eqnarray}
The function $R_2$ was studied in detail by the first  author in
\cite{B1}.  
 One of the key results relates $R_2$ to a certain harmonic
Maass form of weight $\frac32$.
The general case is then considered by the first author, Garvan, and Mahlburg in 
\cite{BGM} and heavily relies on the fact that the rank generating function 
satisfies an interesting partial differential equation (see Theorem \ref{AtkinGarvan}). 
 Before we state this,   
 we would like to mention that relating   functions like (\ref{MomentFunction}) to 
harmonic  Maass forms has interesting applications including  congruences
and asymptotic formulas (see \cite{B1,BGM}).   
To state the above mentioned partial differential equation, we  define the 
\textit{crank} generating function:
$$
C(z;q) := \prod_{n=1}^{\infty} \frac{  \left( 1-q^n\right) }{  \left( 1-zq^n\right)\left(1-z^{-1} q^n \right)  }
$$ 
which was defined by Ramanujan  and which was also used by Andrews and Garvan in \cite{AnG} to explain the Ramanujan congruence (\ref{Ramanujan}) with modulus $11$ (see \cite{AnG} for the combinatorial meaning). 
Moreover we require the modified rank and crank generating functions:
$$
R^*(z;q):=\frac{R(z;q)}{1-z} ,
\qquad C^*(z;q) :=\frac{C(z;q)}{1-z},
$$
which are more natural in the setting of Jacobi forms,  
and   the  differential operators
$$
\delta_z := z \frac{\partial}{\partial z} , \qquad   \delta_q := q \frac{\partial}{\partial q} .
$$
Atkin and Garvan showed the following partial differential equation, relating the rank and the 
crank generating function:  
\begin{theorem} (see \cite{AG}) \label{AtkinGarvan}
\begin{equation} \label{Diff1}
z (q)_{\infty}^2 \left[C^*(z;q) \right]^3 
= \left(3 \delta_q +\frac12 \delta_z + \frac12 \delta_z^2 \right)R^*(z;q).
\end{equation} 
\end{theorem}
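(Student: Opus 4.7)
The Jacobi-form perspective of the paper suggests comparing both sides of \eqref{Diff1} as meromorphic functions of $z$ on the torus $\C^\times/q^\Z$, matching their principal parts at the singular orbit $z \in q^\Z$.

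First, rewrite the crank in theta-series form. Using $(1-z)(zq)_\infty = (z)_\infty$ one has
\[ C^*(z;q) = \frac{(q)_\infty}{(z)_\infty\,(z^{-1}q)_\infty}, \]
and Jacobi's triple product
\[ \Theta(z;q) := \sum_{n \in \Z}(-1)^n z^n q^{n(n-1)/2} = (q)_\infty(z)_\infty(z^{-1}q)_\infty \]
converts the LHS of \eqref{Diff1} into $z(q)_\infty^8 / \Theta(z;q)^3$, which has triple poles exactly at $z \in q^\Z$. On the rank side, the Hecke-type representation in \eqref{RankFunction} gives
\[ R^*(z;q) = \frac{1}{(q)_\infty}\sum_{n\in\Z}\frac{(-1)^n q^{n(3n+1)/2}}{1-zq^n}, \]
with simple poles at the same locus.

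Second, apply $L := 3\delta_q + \tfrac12 \delta_z + \tfrac12 \delta_z^2$ to $R^*$ term-by-term. A direct differentiation yields the pleasant identity
\[ \Bigl(\tfrac12 \delta_z + \tfrac12 \delta_z^2\Bigr)\frac{1}{1-zq^n} = \frac{zq^n}{(1-zq^n)^3}, \]
so this piece manufactures the triple-pole contribution to $LR^*$. The $3\delta_q$ piece contributes the eigenvalue $\tfrac{3n(3n+1)}{2}\cdot\tfrac{1}{1-zq^n}$, a double-pole term $\tfrac{3nzq^n}{(1-zq^n)^2}$, and---from $\delta_q(q)_\infty^{-1}$---a Lambert series $3\sum_{k\geq 1}kq^k/(1-q^k)$, i.e., a quasi-modular $E_2$ contribution. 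Hence $LR^*$ has triple poles at $z \in q^\Z$ of the same general type as the LHS of \eqref{Diff1}.

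Third, conclude by comparison on the torus. Both $LR^*$ and $z(q)_\infty^8/\Theta(z;q)^3$ transform quasi-periodically under $z \mapsto zq$; using Jacobi's triple product to account for the extra terms in the transformation of the Appell-Lerch sum, one sees that the difference of the two sides is $q$-periodic in $z$. Matching the three Laurent coefficients at $z = 1$ (using $\Theta(1;q) = 0$, $\Theta'(1;q) = -(q)_\infty^3$, and expanding to third order) then shows the difference is also holomorphic. Since a holomorphic elliptic function is constant, and the constant is easily pinned to zero (e.g.\ by comparing the $z^0$ coefficients, or by the specialization $R(-1;q) = f(q)$), the identity follows. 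The principal obstacle is this matching of principal parts: the $E_2$-type Lambert series produced by $\delta_q$ on the rank side must cancel precisely against the Taylor expansion of $\Theta^{-3}$ near its simple zero at $z = 1$. This delicate quasi-modular cancellation is what both forces the exact coefficients $3, \tfrac12, \tfrac12$ in $L$ and ultimately reflects the mock-modular nature of $R$.
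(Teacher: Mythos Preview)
Your outline is the same strategy the paper uses in Section~\ref{SectionExamples}: recognize both sides as meromorphic in $u$ with the same elliptic transformation law and only poles on $\Z\tau+\Z$, match at $u=0$, and conclude. The paper packages this by passing to the equivalent form
\[
\vartheta(u;\tau)^3\Bigl(12\pi i\,\partial_\tau+\partial_u^2\Bigr)\bigl(z^{1/2}q^{-1/24}R^*(z;q)\bigr)=-8\pi^2 i\,\eta(\tau)^8,
\]
notes the left side is holomorphic and literally invariant under $u\mapsto u+1$ and $u\mapsto u+\tau$, hence constant in $u$, and evaluates at $u=0$. So the architecture matches; what differs is how carefully the key transformation step is justified.

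That step is where your proposal has a genuine gap. You assert that ``the difference of the two sides is $q$-periodic in $z$,'' but neither side of \eqref{Diff1} is $q$-periodic: under $z\mapsto zq$ each side acquires the factor $-qz^{3}$, so their difference is not elliptic and the sentence ``a holomorphic elliptic function is constant'' does not apply as written. More importantly, showing that $LR^*$ indeed transforms with exactly that multiplier is the heart of the matter, because $R^*$ itself is \emph{not} quasi-periodic in $z$: the Appell--Lerch sum picks up an explicit anomalous term under $z\mapsto zq$ (this is precisely its mock behavior, Proposition~\ref{LerchProposition}(2)). You attribute the disappearance of that term to ``Jacobi's triple product,'' but the triple product plays no role here. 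The actual mechanism is that the anomalous term is a Gaussian of the shape $e^{2\pi i(\alpha-\frac12)u-\pi i(\alpha-\frac12)^2\tau}$ and is \emph{annihilated} by the heat operator; equivalently, in your variables, the extra term is of the form $z^{a}q^{b}$ with $3b+\tfrac12 a(a+1)=0$, so $L$ kills it. The paper makes this transparent by first multiplying by $z^{1/2}q^{-1/24}$ (so that $L$ becomes exactly a constant times $H=12\pi i\,\partial_\tau+\partial_u^2$) and then multiplying by $\vartheta^3$ to absorb the multiplier and obtain a genuinely doubly periodic, holomorphic function of $u$.

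Once you insert those two fixes---replace $L$ on $R^*$ by $H$ on $z^{1/2}q^{-1/24}R^*$, and multiply through by $\vartheta(u;\tau)^3$---your principal-part matching at $u=0$ collapses to a single evaluation (the function is already constant in $u$), exactly as in the paper. Your explicit identity $(\tfrac12\delta_z+\tfrac12\delta_z^2)\,(1-zq^n)^{-1}=zq^n(1-zq^n)^{-3}$ is correct and nicely explains why triple poles appear, but it is not needed for the argument and the separate ``$E_2$ cancellation'' you anticipate never has to be carried out.
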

In this paper, we generalize   (\ref{Diff1}) to  partial differential equations 
for an infinite family of related functions, 
and explain how these  arise naturally  in the setting of  certain non-holomorphic Jacobi forms.  
For this, we consider the general Lerch sum
$$
\mu(u,v)=
\mu(u,v;\tau):=\frac{e^{ \pi i u} }{ \vartheta(v;\tau) } \sum_{n \in \Z} (-1)^n \frac{  e^{\pi i (n^2+n) \tau+2 \pi i v n   }}{   1-e^{ 2 \pi  i n \tau  + 2\pi i u } },
$$
with  
\begin{equation}\label{ThetaDef}
\vartheta(u)=
\vartheta(u;\tau) := \sum_{\nu \in \Z +\frac12} 
e^{ \pi i \nu^2 \tau + 2 \pi i \nu \left(u+\frac12 \right) }.
\end{equation}
Modularity properties of these Lerch sums were studied by the second author  in \cite{Zw1} (see also \cite{Zw2}). 
Moreover, we require the functions:
\begin{equation*}
\begin{split}
\vartheta_0(u;\tau)&:= \sum_{n\in\Z} e^{\pi i n^2 \tau+2\pi inu},\\
\vartheta_1(u;\tau)&:= \sum_{n\in\frac{1}{2}+\Z} e^{\pi i n^2 \tau+2\pi inu},\\
a_0(\tau)=
a_0^{\alpha,\beta}(\tau)&:= \sum_{n\in\Z} \left(n+\alpha/2\right)\ e^{2\pi in^2\tau+2\pi in\left(\alpha\tau+\beta\right)},\\
a_1(\tau)=
a_1^{\alpha,\beta}(\tau)&:= \sum_{n\in\frac{1}{2}+\Z} \left(n+\alpha/2 \right)\ e^{2\pi in^2\tau+2\pi in\left(\alpha\tau+\beta\right)}.
\end{split}
\end{equation*} 
\begin{theorem} \label{maintheorem}
We have for $\alpha,\beta \in \mathbb{R}$:
\begin{equation} \label{mainequation}
\begin{split}
&\vartheta(u;\tau)^3
\left( 4\pi i \frac{\partial}{\partial \tau} + \frac{\partial^2}{\partial u^2}\right) \left\{ 
e^{2\pi i \alpha u-\pi i \alpha^2 \tau} \mu\left(u,\alpha\tau+\beta;\tau\right) \right\}\\
&= e^{2\pi i \alpha u-\pi i \alpha^2 \tau} \frac{16 \pi^2 \eta(\tau)^6}{\vartheta\left(\alpha\tau+\beta;\tau\right)^2} \big\{ a_1(\tau)\ \vartheta_0\left(2u+\alpha\tau+\beta;2\tau\right) -  a_0(\tau)\ \vartheta_1\left(2u+\alpha\tau+\beta;2\tau\right)\big\}.
\end{split}
\end{equation} 
\end{theorem}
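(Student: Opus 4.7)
The plan is to prove (\ref{mainequation}) by first establishing that both sides are entire functions of $u$ with matching elliptic transformation behavior under $u\mapsto u+1$ and $u\mapsto u+\tau$, and then matching them via expansion in a canonical theta basis. A priori, the LHS is meromorphic in $u$: the function $\mu(u,\alpha\tau+\beta;\tau)$ has simple poles along $\Z+\Z\tau$ arising from the $n=0$ term of its defining series, and $H:=4\pi i\partial_\tau+\partial_u^2$ can raise these to order three. The factor $\vartheta(u;\tau)^3$ has triple zeros precisely at these points, so a local analysis shows the LHS is entire in $u$. The RHS is manifestly entire. Once holomorphicity and matching ellipticity are established, the difference of the two sides lies in a finite-dimensional space of theta functions, which can be pinned down by evaluation or by comparing a finite number of coefficients.

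\textbf{Main argument.} Set $G(u;\tau) := e^{2\pi i\alpha u - \pi i\alpha^2\tau}\mu(u,\alpha\tau+\beta;\tau)$ and expand
\[
G(u;\tau)=\frac{e^{(2\alpha+1)\pi iu-\pi i\alpha^2\tau}}{\vartheta(\alpha\tau+\beta;\tau)}\sum_{n\in\Z}(-1)^n\frac{e^{\pi i\tau(n^2+n+2\alpha n)+2\pi in\beta}}{1-e^{2\pi i(u+n\tau)}}.
\]
Apply $H$ termwise. The operator $\partial_\tau$ hits (i) the overall factor $e^{-\pi i\alpha^2\tau}\vartheta(\alpha\tau+\beta;\tau)^{-1}$, (ii) the per-term exponential, producing factors $n^2+n+2\alpha n$, and (iii) the denominator $1-e^{2\pi i(u+n\tau)}$, producing factors proportional to $n$; the operator $\partial_u^2$ differentiates the denominator. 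After multiplying by $\vartheta(u;\tau)^3$ the result is an entire function on $\C/(\Z+\Z\tau)$. To identify it with the RHS, use the classical level-$2\tau$ splitting (from Jacobi's duplication/addition formula)
\[
\vartheta(u;\tau)^2 = \vartheta_1(0;2\tau)\,\vartheta_0(2u;2\tau) - \vartheta_0(0;2\tau)\,\vartheta_1(2u;2\tau),
\]
combined with the remaining factor of $\vartheta(u;\tau)$. This reorganizes the sum over $n$ by the parity of the summation index: the integer-indexed piece assembles into $\vartheta_0(2u+\alpha\tau+\beta;2\tau)$ while the half-integer-indexed piece assembles into $\vartheta_1(2u+\alpha\tau+\beta;2\tau)$. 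The derivative factors of shape $n+\alpha/2$ produced when $\partial_\tau$ hits $e^{\pi i\tau(n^2+n+2\alpha n)}$ package into precisely $a_0^{\alpha,\beta}(\tau)$ and $a_1^{\alpha,\beta}(\tau)$. The overall constant $16\pi^2\eta(\tau)^6/\vartheta(\alpha\tau+\beta;\tau)^2$ is fixed by the Jacobi derivative identity $\vartheta'(0;\tau)=-2\pi\eta(\tau)^3$, entering through the residue at $u=0$ and the overall $\vartheta(\alpha\tau+\beta;\tau)^{-1}$ prefactor.

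\textbf{Main obstacle.} The principal difficulty is the algebraic bookkeeping in the theta reorganization: one must track how derivatives from $H$ distribute across the parity classes of the level-$2\tau$ splitting, and verify that the resulting coefficients assemble into exactly $a_1\vartheta_0-a_0\vartheta_1$ with the correct signs and phases across integer- and half-integer-indexed sums. A cleaner alternative may be to pass to Zwegers' nonholomorphic completion $\hat\mu(u,v;\tau)=\mu(u,v;\tau)+\tfrac{i}{2}R(u-v;\tau)$: then $\vartheta(u;\tau)^3 e^{2\pi i\alpha u-\pi i\alpha^2\tau}\hat\mu(u,\alpha\tau+\beta;\tau)$ is a non-holomorphic Jacobi form of weight $2$, and (\ref{mainequation}) can be read off as the holomorphic part of its heat-operator image, with the anti-holomorphic contributions computed from Zwegers' explicit formula for $\partial_{\bar\tau}R$.
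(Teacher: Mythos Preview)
Your \textbf{Strategy} paragraph is exactly the paper's approach: show the left-hand side is entire in $u$ (triple zeros of $\vartheta^3$ cancel the at-worst triple poles of $Hf$), pin down its elliptic transformation law, and then expand in a canonical two-dimensional theta basis at level $2\tau$. That part is fine.

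The problem is that your \textbf{Main argument} does not execute this strategy. Instead of using the basis argument you announced, you try a direct termwise computation: apply $H$ to the defining series of $\mu$, multiply by $\vartheta(u)^3$, invoke the level-$2\tau$ splitting of $\vartheta(u)^2$, and hope the remaining $\vartheta(u)$ and the differentiated denominators $(1-e^{2\pi i(u+n\tau)})^{-1}$ reassemble into $a_1\vartheta_0-a_0\vartheta_1$. This is a different route, and as written it has a real gap. The action of $\partial_u^2$ on each term produces cubic denominators $(1-zq^n)^{-3}$; the action of $\partial_\tau$ on the prefactor $e^{-\pi i\alpha^2\tau}/\vartheta(\alpha\tau+\beta;\tau)$ produces an extra $\vartheta'/\vartheta$ factor; and the ``remaining $\vartheta(u)$'' has no obvious way to interact with any of this. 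Your parity-reorganization claim (``integer-indexed piece assembles into $\vartheta_0$, half-integer into $\vartheta_1$'') is asserted, not derived: the summation index $n$ in the Lerch series is not the same index that distinguishes $\vartheta_0$ from $\vartheta_1$, and nothing in the computation forces the $n+\alpha/2$ factors to decouple as cleanly as $a_0$ and $a_1$. You flag this yourself as the ``main obstacle,'' which is accurate --- but it means the argument is not yet a proof.

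What the paper actually does after reaching the two-dimensional theta decomposition $g(u)=e^{2\pi i\alpha u}\bigl(c_0\vartheta_0(2u+\alpha\tau+\beta;2\tau)+c_1\vartheta_1(2u+\alpha\tau+\beta;2\tau)\bigr)$ is to determine $c_0,c_1$ by a \emph{local} calculation at $u=0$: from the known residue of $\mu$ one gets $g(0)$ and $g'(0)$ explicitly, yielding a $2\times 2$ linear system in $c_0,c_1$. The determinant of that system is evaluated via the addition-type identity
\[
\vartheta_0(v_1+v_2;2\tau)\vartheta_1(v_1-v_2;2\tau)-\vartheta_1(v_1+v_2;2\tau)\vartheta_0(v_1-v_2;2\tau)=\vartheta(v_1;\tau)\vartheta(v_2;\tau),
\]
differentiated in $v_2$ at $v_2=0$. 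Solving the system gives the coefficients $a_1$ and $a_0$ on the nose. This is the missing step that replaces your termwise bookkeeping, and it is where $a_0,a_1$ genuinely enter --- not from ``$n+\alpha/2$ factors in the series'' but from the derivative of $\vartheta_j(\alpha\tau+\beta;2\tau)$ appearing in $g'(0)$. Your $\hat\mu$ alternative is mentioned in the paper as motivation, but the proof there is purely holomorphic and does not use the completion.
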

\begin{remark}
For $\alpha,\beta\in \mathbb{Q}$, the functions $a_0$, $a_1$ and $\tau\mapsto \vartheta(\alpha\tau+\beta;\tau)$ are, up to rational powers of $q$, modular forms. Similarly, $(u,\tau)\mapsto\vartheta_i(2u+\alpha\tau+\beta;2\tau)$ is up to a rational power of $q$ and $z$ a Jacobi form. Consequently, the right hand side is a (meromorphic) Jacobi form.
\end{remark}

The idea behind this theorem is as follows: using work of the second author in \cite{Zw1} (see also \cite{Zw2}), 
 one can conclude that 
\begin{equation} \label{LerchJacobi}
(u,\tau) \mapsto e^{2\pi i \alpha u-\pi i \alpha^2 \tau} \mu(u,\alpha\tau+\beta;\tau) 
\end{equation}
is the holomorphic part of a  non-holomorphic Jacobi form of weight $1/2$ and index $-1/2$. 
Generalizing the theory of classical holomorphic Jacobi forms one can show that applying the heat operator 
$$
\left( 4\pi i \frac{\partial}{\partial \tau} + \frac{\partial^2}{\partial u^2}\right)
$$ 
to (\ref{LerchJacobi}) yields again the holomorphic part of a non-holomorphic Jacobi form. The heat operator raises the weight by 2. 
Surprisingly, the associated non-holomorphic part is killed by the heat operator, thus  
$$
(u,\tau)\mapsto \left( 4\pi i \frac{\partial}{\partial \tau} + \frac{\partial^2}{\partial u^2}\right) \left\{ 
e^{2\pi i \alpha u-\pi i \alpha^2 \tau} \mu(u,\alpha\tau+\beta;\tau) \right\}\
$$
is a (meromorphic) Jacobi form, of weight $5/2$ and index $-1/2$. By analyzing the behaviour at the poles, we can then identify it. Unfortunately, this turned out not to be so easy. We had to compute the Fourier expansion of the left hand side of equation \eqref{mainequation}, using PARI/GP (see \cite{pari}), to come up with the right hand side. The proof of the theorem, however, is direct and doesn't even use these non-holomorphic Jacobi forms.

Using the same method as in the proof of Theorem \ref{maintheorem} we show in Section 
\ref{SectionExamples} how (\ref{Diff1}) and related partial differential equations
 fit in the same framework of Jacobi forms, which allows us to give a more natural and shorter proof.

As an application we consider congruences for certain partition statistics introduced by Andrews in \cite{A1}. 
For this,  let 
$N^o(m,n)$ be the number of partitions related to  an odd Durfee symbol of size $n$ 
(see \cite{A1} for the combinatorial definitions).  
In this paper, we   only  require the  generating function
$$
R^o(z;q) :=
 \sum_{n=1}^{\infty} \sum_{m \in \Z} N^o(m,n)\, z^m \, q^n
= \frac{1}{\left(q^2;q^2 \right)_{\infty}} \sum_{n \in \Z} 
\frac{ (-1)^n \, q^{3n^2+3n+1}    }{    1-z\, q^{2n+1}     }.
$$
Moreover define 
$$
\eta_k^o(n) := \sum_{m \in \Z}
\left(
\begin{matrix}
m + \left[    \frac{k}{2}   \right] \\
k
\end{matrix}
\right)
\, N^o(m,n).
$$
As before one can show that $\eta_k^o(n)=0$   if $k$ is odd, 
therefore in the following we only have to consider even moments. 
We show congruences for $\eta_k$.
\begin{theorem} \label{CongruenceTheorem}
Let $ j ,k\in \N$, $k$ even, and $p>3$  be a prime. Then there exist infinitely many arithmetic progressions $An+B$ such that 
$$
\eta_{k}^o(An+B) \equiv 0 \pmod{p^j}.
$$
\end{theorem}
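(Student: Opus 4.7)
The plan is to emulate the strategy used for the symmetrized rank moments in \cite{BGM}, with the necessary modifications for the odd Durfee case. The three main ingredients are: a partial differential equation for $R^o(z;q)$ coming from the Jacobi form framework of Theorem \ref{maintheorem}, the interpretation of $R^o$ as the holomorphic part of a non-holomorphic Jacobi form via the Lerch sum $\mu$ studied in \cite{Zw1}, and the congruence machinery for weakly holomorphic half-integral weight modular forms (Treneer-type theorems).

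First I would derive a PDE for $R^o(z;q)$ analogous to \eqref{Diff1} by specializing Theorem \ref{maintheorem} to parameters that reproduce the Lerch-sum representation of $R^o(z;q)$ (with $\tau$ replaced by $2\tau$, reflecting the appearance of $q^2$ in the prefactor and the exponent $2n+1$ in the denominator). Expanding the resulting PDE around $u=0$ (equivalently $z=1$) and extracting the coefficient of $u^{k-1}$ yields a recursion expressing $R_k^o(q)$ in terms of the lower moments $R_j^o(q)$ with $j<k$, plus explicit (quasi)modular correction terms built from $\eta$, $\vartheta_0$, $\vartheta_1$ and the functions $a_0^{\alpha,\beta}$, $a_1^{\alpha,\beta}$. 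Induction on $k$ then shows that each $R_k^o$ is the holomorphic part of a harmonic Maass form of half-integral weight on a suitable congruence subgroup, with shadow explicitly given by unary theta series coming from the non-holomorphic completion of $\mu$.

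I would then apply holomorphic projection, after multiplying by an appropriate power of $\eta$ to place the form in a weight regime $\geq 2$ where the projection converges, to produce a weakly holomorphic modular form $\widetilde{R}_k^o$ whose Fourier coefficients agree with $\eta_k^o(n)$ on a prescribed arithmetic progression modulo $p^j$ (the contributions from the non-holomorphic part being supported on a controllable subset of indices that can be sieved away). Treneer's theorem on congruences for weakly holomorphic half-integral weight modular forms then guarantees, for each prime $p>3$ and each $j \geq 1$, infinitely many progressions $An+B$ along which the coefficients of $\widetilde{R}_k^o$ vanish modulo $p^j$. Translating back produces the asserted congruences $\eta_k^o(An+B) \equiv 0 \pmod{p^j}$.

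The main obstacle is the mock modularity step: carefully tracking how differentiation in $u$ (equivalently $z$) and the Jacobi heat operator act on the non-holomorphic completion of $R^o$, and establishing inductively that each $R_k^o$ admits a harmonic Maass form completion of the correct half-integral weight with unary theta shadow. The interaction between the $z$-derivative at $z=1$ and the non-holomorphic piece is delicate, and this is exactly where the PDE framework becomes essential: the PDE converts $\partial_u^2$ into a first order operator in $\tau$, which is what allows the recursion to close within the class of mock modular forms. The restriction $p>3$ in the statement reflects the level of the resulting modular form (divisible by $6$) and the quadratic character attached to it, to both of which the prime $p$ must be coprime for the congruence machinery to apply.
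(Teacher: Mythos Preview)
Your outline follows the same overall program as the paper (PDE $\Rightarrow$ recursion in $u$-derivatives $\Rightarrow$ control of the non-holomorphic piece $\Rightarrow$ Treneer), and it would go through; the paper, however, streamlines two of your steps in ways worth noting.

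First, the paper does not specialize Theorem~\ref{maintheorem} to obtain the PDE for $R^o$. Instead it uses the elementary identity
\[
R^o(z;q)=z^{-1}\bigl(R^*(zq;q^2)-1\bigr)
\]
to pull the classical Rank--Crank PDE \eqref{Diff2} back along the substitution $(u,\tau)\mapsto(u+\tau,2\tau)$, yielding
\[
\Bigl(6\pi i\,\tfrac{\partial}{\partial\tau}+\tfrac{\partial^2}{\partial u^2}\Bigr)\bigl(q^{-1/3}R^o(z;q)\bigr)
=-\,\frac{8\pi^2 i\,\eta^8(2\tau)}{\vartheta^3(u+\tau;2\tau)\,q^{3/4}z^{3/2}}.
\]
Differentiating the right-hand side $2\ell$ times in $u$ and setting $u=0$ gives a quasimodular form $g_\ell(\tau)$ directly, with no $a_0,a_1,\vartheta_0,\vartheta_1$ bookkeeping.

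Second, and more substantively, the paper avoids holomorphic projection entirely. Your claim that each $R_k^o$ is the holomorphic part of a \emph{harmonic} Maass form is not literally correct: the recursion coming from the PDE is $\Psi_{\ell+1}=-6\pi i\,\partial_\tau\Psi_\ell+g_\ell$, and $\partial_\tau$ does not preserve harmonicity, so one is forced into the quasi-mock framework of \cite{BGM}. The paper sidesteps this by observing (via \cite{Zw2}) that the non-holomorphic part of the completion of $\Psi(u;\tau):=z^{-1}q^{-1/3}(zR^o(z;q)+1)$ is supported on a fixed set of arithmetic progressions; on the complementary set $\mathcal{S}_p$ of residue classes the restriction of each $\Psi_\ell$ is then, by induction along the recursion, an honest quasimodular form. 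Serre's result that quasimodular forms are $p$-adic modular forms, together with Treneer's theorem, then finishes the proof. The sieving step thus replaces both your holomorphic-projection argument and the delicate shadow tracking you anticipate.
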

  The paper is organized as follows:  
  In Section \ref{MainSection}, we prove Theorem \ref{maintheorem} which we illustrate in Section  
  \ref{SectionExamples}. Section \ref{CongruenceSection} is devoted the proof of Theorem \ref{CongruenceTheorem}.

 %%%%

\section{Proof of Theorem \ref{maintheorem}} \label{MainSection}
We first recall transformation properties of Jacobi-theta functions and Lerch sums. These can be found in \cite{Zw1} (see also \cite{Zw2}).
\begin{proposition} \label{ThetaProposition}
For $u \in \C$ and $ \tau \in \H$, the  
 function $\vartheta$ satisfies: 
 \begin{enumerate}
 \item   $\vartheta(u+1)=- \vartheta(u)$,
 \item  $\vartheta(u+\tau)= -e^{ - \pi i \tau-2 \pi i u} \, \vartheta(u)$,
 \item $\vartheta(-u)=-\vartheta(u)$,
 \item $\vartheta'(0,\tau) = -2 \pi \eta^3(\tau)$.
 \end{enumerate}
\end{proposition}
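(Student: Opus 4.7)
The four properties all follow directly from the series definition (\ref{ThetaDef}). My plan is to treat (1)--(3) by elementary reindexing of the sum over $\nu \in \Z + \frac{1}{2}$, and to deduce (4) by differentiating termwise and invoking Jacobi's classical identity for $\eta^3$.

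For (1), replacing $u$ by $u+1$ multiplies the $\nu$th summand by $e^{2\pi i \nu}$, which equals $-1$ uniformly for $\nu \in \Z+\frac{1}{2}$; pulling the sign outside the sum yields $-\vartheta(u)$. For (2), replacing $u$ by $u+\tau$ introduces a factor $e^{2\pi i \nu \tau}$, and I would complete the square via $\pi i \nu^2 \tau + 2\pi i \nu \tau = \pi i (\nu+1)^2 \tau - \pi i \tau$; reindexing with $\mu := \nu+1$ (which preserves $\Z+\frac{1}{2}$) reassembles $\vartheta(u;\tau)$ and leaves the overall factor $e^{-\pi i \tau}\, e^{-2\pi i(u+1/2)} = -e^{-\pi i \tau - 2\pi i u}$. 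For (3), the substitution $\nu \mapsto -\nu$ is an involution of $\Z+\frac{1}{2}$ that preserves $\nu^2$, and rewrites the linear piece of the exponent as $2\pi i \nu(u+\tfrac{1}{2}) - 2\pi i \nu$; once again $e^{-2\pi i \nu} = -1$ pulls out and produces $-\vartheta(u)$.

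For (4), I would differentiate (\ref{ThetaDef}) termwise at $u=0$:
$$\vartheta'(0;\tau) = \sum_{\nu \in \Z+\frac{1}{2}} 2\pi i \nu\, e^{\pi i \nu^2 \tau + \pi i \nu}.$$
Writing $\nu = n+\tfrac{1}{2}$, so that $e^{\pi i \nu} = (-1)^n i$, and pairing the indices $n$ and $-n-1$ (which carry the same $q$-power and yield equal signed contributions), I expect to obtain
$$\vartheta'(0;\tau) = -2\pi\, q^{1/8} \sum_{n \ge 0} (-1)^n (2n+1)\, q^{n(n+1)/2}.$$
The identification with $-2\pi \eta^3(\tau)$ then follows from Jacobi's identity $\sum_{n\ge 0}(-1)^n(2n+1)\, q^{n(n+1)/2} = (q)_\infty^3$ together with $\eta(\tau) = q^{1/24} (q)_\infty$.

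The only step beyond pure bookkeeping is this final appeal to Jacobi's formula for $\eta^3$, and that is the main (and essentially unavoidable) substance of the proposition; items (1)--(3) are purely formal manipulations of the defining lattice sum.
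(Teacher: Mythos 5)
Your verification is correct in all four parts: the reindexings for (1)--(3) and the termwise differentiation plus the pairing $n \leftrightarrow -n-1$ for (4) all check out, with the final step resting on Jacobi's identity $\sum_{n\ge 0}(-1)^n(2n+1)q^{n(n+1)/2}=(q)_\infty^3$ as you say. The paper itself supplies no proof of this proposition --- it simply cites \cite{Zw1} and \cite{Zw2} --- so your direct computation from the defining series \eqref{ThetaDef} is exactly the standard argument one would find there, and nothing further is needed.
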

\begin{proposition} \label{LerchProposition}
For $u,v \in \C \setminus \left(\Z \tau+ \Z\right)$, the  
 function $\mu$ satisfies:
\begin{enumerate}
\item $\mu(u+1,v)= -\mu(u,v)$,
\item $\mu(u,v)+e^{  -2 \pi i (u-v) - \pi i \tau} \, \mu(u + \tau,v)
= -i e^{- \pi i (u-v) - \frac{\pi  i \tau }{4} }$.
\item The function $u \to \mu(u,v)$ is a meropmorphic function with simple poles in the points $u=n \tau+m$ ($n,m \in \Z$), and residue $-\frac{1}{2 \pi i} \frac{1}{\vartheta(v)}$ in $u=0$. 
\end{enumerate}
\end{proposition}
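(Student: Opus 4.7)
The three assertions are proved by direct manipulation of the defining series of $\mu$. Properties (1) and (3) follow essentially by inspection, while (2)---the quasi-periodicity in $u$ under $\tau$-shifts---requires an index shift together with the recognition of a theta series. My plan is to treat the three parts in that order.

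For (1), the substitution $u\mapsto u+1$ changes the prefactor $e^{\pi i u}$ to $-e^{\pi i u}$, while each denominator $1-e^{2\pi i n\tau + 2\pi i u}$ is invariant by periodicity of the exponential. Hence $\mu(u+1,v)=-\mu(u,v)$ term by term. For (3), the $n$-th summand has a simple pole in $u$ precisely where $1-e^{2\pi i n\tau+2\pi i u}$ vanishes, i.e.\ at $u\in -n\tau+\Z$; as $n$ ranges over $\Z$ this gives exactly $\Z\tau+\Z$. At $u=0$ only the $n=0$ summand is singular, and the expansion $1-e^{2\pi i u}=-2\pi i u + O(u^2)$ combined with evaluating the rest of the $n=0$ term at $u=0$ yields residue $-\tfrac{1}{2\pi i}\cdot\tfrac{1}{\vartheta(v)}$.

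The substance is in (2). The plan is to rewrite $e^{-2\pi i(u-v)-\pi i\tau}\mu(u+\tau,v)$ by shifting the summation index $n\mapsto n-1$ so that its denominators align with those of $\mu(u,v)$. Straightforward bookkeeping of the exponential factors shows
\[
e^{-2\pi i(u-v)-\pi i\tau}\mu(u+\tau,v)=-\frac{e^{-\pi i u}}{\vartheta(v)}\sum_{n\in\Z}(-1)^n\frac{e^{\pi i(n^2-n)\tau+2\pi i v n}}{1-e^{2\pi i n\tau+2\pi i u}}.
\]
Adding $\mu(u,v)$ and combining over the common denominator, the numerator in each term factors as $-e^{-\pi i u}e^{\pi i(n^2-n)\tau+2\pi i v n}\bigl(1-e^{2\pi i n\tau+2\pi i u}\bigr)$, so the denominators cancel completely and the sum collapses to
\[
\mu(u,v)+e^{-2\pi i(u-v)-\pi i\tau}\mu(u+\tau,v)=-\frac{e^{-\pi i u}}{\vartheta(v)}\sum_{n\in\Z}(-1)^n e^{\pi i(n^2-n)\tau+2\pi i v n}.
\]
Reindexing via $\nu=n-\tfrac12\in\tfrac12+\Z$ in the definition \eqref{ThetaDef} of $\vartheta$, the remaining sum equals $ie^{-\pi i\tau/4+\pi i v}\vartheta(v;\tau)$, so $\vartheta(v)$ cancels and the right-hand side becomes $-ie^{-\pi i(u-v)-\pi i\tau/4}$, as desired.

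The main (and essentially only) obstacle is in (2): carefully tracking the several exponential prefactors through the index shift, and spotting that the telescoped sum is precisely $\vartheta(v;\tau)$ up to a $\tau$-dependent exponential. Once this identification is made the claimed quasi-periodicity is immediate, and the analytic claims in (3) are a straightforward local analysis of the explicit summands.
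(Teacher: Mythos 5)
Your proof is correct. The paper itself offers no proof of this proposition --- it simply cites \cite{Zw1} and \cite{Zw2} --- and your computation (periodicity of the denominators for (1), the index shift $n\mapsto n-1$ followed by telescoping against the common denominator and recognition of $\vartheta(v;\tau)$ via $\nu=n-\tfrac12$ for (2), and the local expansion of the $n=0$ term for (3)) is exactly the standard argument given in those references. The only point left tacit is that the series converges locally uniformly away from $\Z\tau+\Z$, which justifies the termwise reindexing; this is routine.
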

\begin{proof}[Proof of Theorem \ref{maintheorem}]  
For the proof, define the function 
\begin{equation*}
f(u)=f(u;\tau):= e^{2\pi i\alpha u -\pi i \alpha^2 \tau} \mu(u,\alpha\tau+\beta;\tau).
\end{equation*}
Using   (1) and (2) of Proposition \ref{LerchProposition}, we find that
\begin{equation}\label{transf}
\begin{split}
f(u+1)&=-e^{2\pi i \alpha} f(u),\\
f(u)&= -e^{-2\pi i(u-\beta)-\pi i\tau} f(u+\tau) -i e^{\pi i\beta} e^{2\pi i\left(\alpha -\frac{1}{2}\right)u-\pi i\left(\alpha-\frac{1}{2}\right)^2\tau}.
\end{split}
\end{equation}
Now define the Heat operator $H$ by
\begin{equation*}
H:=4\pi i \frac{\partial}{\partial\tau} + \frac{\partial^2}{\partial u^2}.
\end{equation*}
It is easy to check that 
\begin{equation*}
H\left( e^{2\pi i\left(\alpha -\frac{1}{2}\right)u-\pi i\left(\alpha-\frac{1}{2}\right)^2\tau}\right) =0,
\end{equation*}
and that for functions $F:\C \times \H \to \C$
\begin{equation*}
\begin{split}
H\left( F(u+1,\tau) \right)&= \left( HF\right)(u+1,\tau),\\
H\left( e^{-2\pi iu-\pi i\tau} F(u+\tau,\tau)\right)&=
e^{-2\pi iu-\pi i\tau}\left(HF\right)(u+\tau,\tau).
\end{split}
\end{equation*}
Using these properties of the Heat operator, we find from equation \eqref{transf} the following transformation properties of $Hf$:
\begin{eqnarray}  \label{Prop1}
(Hf)(u+1)&=&-e^{2\pi i \alpha} (Hf)(u),\\  \label{Prop2}
(Hf)(u)&=& -e^{-2\pi i(u-\beta)-\pi i\tau} (Hf)(u+\tau).
\end{eqnarray} 
Since the  poles of $f$ are simple poles in $\Z\tau+\Z$, the function  $Hf$ has triple poles in $\Z\tau+\Z$. 
Since $\vartheta$ has simple zeros in $\Z \tau +\Z$,  
the function 
\begin{equation*}
g(u)=g(u;\tau):= \vartheta(u;\tau)^3 (Hf)(u;\tau),
\end{equation*}
which is the left-hand side of (\ref{mainequation}), 
is a holomorphic function as a function of $u$.  Using (\ref{Prop1}), (\ref{Prop2}), and  (1) and (2) of Proposition \ref{ThetaProposition}, 
 we find that
\begin{equation*}
\begin{split}
g(u+1)&=e^{2\pi i\alpha} g(u),\\
g(u+\tau)&= e^{-4\pi i(u+\beta/2)-2\pi i \tau} g(u).
\end{split}
\end{equation*}
We next consider the function $\widetilde{g}$ defined  by the equation
\begin{equation*}
g(u)=e^{2\pi i\alpha u}\,  \widetilde{g} \left(u+(\alpha\tau+\beta)/2\right).
\end{equation*}
This function satisfies:
\begin{equation*}
\begin{split}
\widetilde{g}(u+1)&=\widetilde{g}(u),\\
\widetilde{g}(u+\tau)&=e^{-4\pi iu-2\pi i\tau} \widetilde{g}(u).
\end{split}
\end{equation*}
Thus we obtain, similarly as in \cite[pp.\ 57-58]{EZ}, 
\begin{equation*}
\begin{split}
\widetilde{g}(u;\tau)&=c_0(\tau)\ \vartheta_0 (2u;2\tau) +c_1(\tau)\ \vartheta_1(2u;2\tau),\\
g(u;\tau)&= e^{2\pi i\alpha u} \Bigl(c_0(\tau)\  \vartheta_0 (2u+\alpha\tau+\beta;2\tau) +c_1(\tau)\  \vartheta_1(2u+\alpha\tau+\beta;2\tau)\Bigr).
\end{split}
\end{equation*} 

For convenience we normalize the functions $c_j$ ($j=0,1$):
\begin{equation*}
c_j (\tau) = (-1)^j e^{-\pi i \alpha^2\tau} \frac{16 \pi^2 \eta(\tau)^6}{\vartheta(\alpha\tau+\beta;\tau)^2}\ b_j(\tau).
\end{equation*}
 Thus we have shown that   there exist  functions $b_0$ and $b_1$ such that
\begin{equation}\label{forg}
g(u)=e^{2\pi i \alpha u-\pi i \alpha^2 \tau} \frac{16 \pi^2 \eta(\tau)^6}{\vartheta(\alpha\tau+\beta;\tau)^2} \Bigl\{ b_0(\tau)\ \vartheta_0(2u+\alpha\tau+\beta;2\tau) -  b_1(\tau)\ \vartheta_1(2u+\alpha\tau+\beta;2\tau)\Bigr\}.
\end{equation}
What remains to prove  is that $b_0=a_1$ and $b_1=a_0$. For this we evaluate   $g$ and $g'$ at $u=0$. From (3) of Proposition \ref{LerchProposition} 
we see  that $f$ has in $u=0$ a simple pole with residue
\begin{equation*}
-\frac{e^{-\pi i\alpha^2\tau}}{2\pi i\ \vartheta(\alpha\tau+\beta)}.
\end{equation*}
From this we can easily show that as $u \to 0$:
\begin{equation*}
(Hf)(u) = -\frac{e^{-\pi i\alpha^2\tau}}{\pi i\ \vartheta(\alpha\tau+\beta)} \frac{1}{u^3} +\mathcal{O}\left(\frac{1}{u}\right).
\end{equation*} 
 From (3) and (4) of Proposition \ref{ThetaProposition} we see that
\begin{equation*}
\vartheta(u)^3=-8 \pi^3 \eta(\tau)^9 u^3 +\mathcal{O}(u^5),
\end{equation*}
and thus
\begin{equation}\label{expg}
g(u) = -e^{-\pi i\alpha^2\tau}\frac{8i \pi^2 \eta(\tau)^9}{\vartheta(\alpha\tau+\beta)} +\mathcal{O}(u^2).
\end{equation}
We now find two equations by setting $u=0$ and by first taking $\frac{\partial}{\partial u}$ and then 
setting $u=0$. Using (\ref{forg}) and that 
\begin{equation*}
a_j(\tau)=\frac{1}{2\pi i}\vartheta'_j\left(\alpha\tau+\beta;2\tau\right)
+\frac{\alpha}{2} \vartheta_j\left(\alpha\tau+\beta;2\tau\right)
\end{equation*}
gives the system:
\begin{equation}\label{mat}
\begin{pmatrix}\vartheta_0\left(\alpha\tau+\beta;2\tau\right)&
 -\vartheta_1\left(\alpha\tau+\beta;2\tau\right)\\a_0(\tau)&-a_1(\tau)\end{pmatrix} \begin{pmatrix} b_0(\tau)\\ b_1(\tau)\end{pmatrix}=\begin{pmatrix} -\frac{i}{2} \eta(\tau)^3 \vartheta\left(\alpha\tau+\beta;\tau\right)\\0\end{pmatrix}.
\end{equation}
To solve (\ref{mat}) for $b_0$ and $b_1$, 
we require the following relation between theta series. 
\begin{lemma} \label{ThetaLemma}
We have
\begin{equation*}
 \vartheta_0\left(v_1+v_2;2\tau\right) \vartheta_1\left(v_1-v_2;2\tau\right) 
 - \vartheta_1\left(v_1+v_2;2\tau\right) \vartheta_0\left(v_1-v_2;2\tau\right)=
 \vartheta\left(v_1;\tau\right) \vartheta\left(v_2;\tau\right).
\end{equation*}
\end{lemma}
\begin{proof} We can write the left hand side as
\begin{equation*}
 \left( \sum_{n\in\Z, m\in\frac{1}{2}+\Z} -\sum_{n\in\frac{1}{2}+\Z, m\in\Z}\right) 
 \ e^{2\pi i\left(n^2+m^2\right)\tau+2\pi i\left(n\left(v_1+v_2\right)+m\left(v_1-v_2\right)\right)}.
\end{equation*}
If we make the change of variables $r=n+m$ and $s=n-m$ we get the desired result.
\end{proof}
Apply $\frac{\partial}{\partial v_2}$ to the equation in Lemma \ref{ThetaLemma}, set $v_1=\alpha\tau+\beta$, $v_2=0$, and divide by $4\pi i$, to get
\begin{equation*}
\begin{split}
\det&\begin{pmatrix}\vartheta_0\left(\alpha\tau+\beta;2\tau\right)& -\vartheta_1\left(\alpha\tau+\beta;2\tau\right)\\a_0(\tau)&-a_1(\tau)\end{pmatrix}\\ &=\frac{1}{2\pi i}\det\begin{pmatrix}\vartheta_0\left(\alpha\tau+\beta;2\tau\right)& -\vartheta_1\left(\alpha\tau+\beta;2\tau\right)\\\vartheta'_0\left(\alpha\tau+\beta;2\tau\right)& -\vartheta'_1\left(\alpha\tau+\beta;2\tau\right)\end{pmatrix}\\
&= \frac{i}{2} \eta(\tau)^3 \vartheta\left(\alpha\tau+\beta;\tau\right).
\end{split}
\end{equation*}
If we invert the matrix in equation \eqref{mat}, we find that $b_0=a_1$ and $b_1=a_0$ and so we have the desired result.
\end{proof}

%%%
\section{Examples} \label{SectionExamples}
\subsection{The classical rank case}
Here we reprove equation \eqref{Diff1} using the methods from the proof of Theorem \ref{maintheorem}.  
First we observe that we can write ($z=e^{2 \pi i u}$)
\begin{equation} \label{Rstar1}
R^*(z;q)= 
iz^{-\frac32 }  q^{-\frac18} \mu(3u,-\tau;3 \tau) - i z^{\frac12} q^{-\frac18} \mu(3u,\tau;3 \tau) - iz^{-\frac12} q^{ \frac{1}{24}} \frac{ \eta^3(3 \tau)  }{ \eta(\tau) \vartheta(3u;3 \tau)  }.
\end{equation}
Moreover it is not hard to see that (\ref{Diff1}) is equivalent to 
\begin{equation} \label{Diff2}
\vartheta^3(u;\tau)
\left( 12  \pi i \frac{\partial}{\partial \tau }  + \frac{\partial^2 }{\partial u^2 }   \right) 
 \left( z^{\frac12} q^{-\frac{1}{24}} R^*(z;q)\right)  
 = -8 \pi^2 i \eta^8(\tau)
\end{equation} 
As in the proof of Theorem \ref{maintheorem}, we first consider the elliptic transformation properties: we observe that the left-hand side is invariant under $u\mapsto u+1$ and $u\mapsto u+\tau$. Furthermore, it has no poles as a function of $u$ and hence is constant (as a function of $u$). Plugging  in $u=0$  gives the desired formula using   
  Proposition \ref{ThetaProposition} (4) and Proposition \ref{LerchProposition}.
  %%%%
  %%
 \subsection{The overpartition case} 
 Consider the functions 
 \begin{equation} \label{generalCase}
N(d,e,z;q) := 
 \sum_{n \geq 0}
\frac{(-1/d,-1/e)_n(deq)^n}{(zq,q/z)_n},
\end{equation}
where $(a_1,\cdots,a_m)_n=(a_1,\cdots,a_m;q)_n:=\prod_{j=1}^{m}(a_j)_n$, and 
\begin{equation*}
N^*(d,e,z;q):= \frac{ N(d,e,z;q)}{1-z}.
\end{equation*}
For the combinatorics of these functions, we refer the reader to \cite{BLO}. 
When $e = 0$ and $d=1$ we recover the generating function for
Dyson's rank for overpartitions (see \cite{Lo1}), and when both $d$ and
$e = 0$ we recover the generating function for Dyson's rank for
partitions. When $q=q^2$, $d=1$, and $e = 1/q$, we have the
$M_2$-rank for overpartitions (see \cite{Lo2}), and when $q=q^2$, $d=0$,
and $e = 1/q$, we have the $M_2$-rank for partitions without
repeated odd parts (see \cite{BG}).  
In \cite{BLO} partial differential equations were shown for all these cases. 
Here we show how they follow easily using the methods of proof of Theorem \ref{maintheorem}.  
We start with the case $(d,e)=(1,0)$. 

\begin{theorem}(see \cite{BLO})
 \label{pde} We have
\begin{equation}\label{pdestar}
z\frac{(q)_{\infty}^2}{(-q)_{\infty}} \left[C^{*}(z;q)\right]^3  \left(-z,-q/z  \right)_{\infty}
 =
\Bigl (2(1+z) \delta_{q} + \frac{z}{2}  + z\delta_{z} +
\frac{1}{2} (1+z) \delta_{z}^2 \Bigl )N^{*}(1,0,z;q).
\end{equation} 
\end{theorem}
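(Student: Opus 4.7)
The plan is to adapt the strategy used in the proof of Theorem \ref{maintheorem} and in the classical rank case treated above. First I would seek an Appell-Lerch representation of $N(1,0,z;q)$ as a finite sum of terms of the form $z^a q^b \mu(u,v;\tau)$, plus possibly an eta/theta-quotient correction, analogous to the identity \eqref{Rstar1} for $R^*(z;q)$. Such a representation is available from the Hecke-type identities used to study the functions $N(d,e,z;q)$ in \cite{BLO}; the case $(d,e)=(1,0)$ corresponds to the overpartition rank generating function, which has a well-known expansion as an Appell-Lerch sum.

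Next I would recast \eqref{pdestar} as a heat-operator identity, in the spirit of \eqref{Diff2}. Multiplying both sides by $\vartheta(u;\tau)^3$ and by a suitable factor $z^a q^b$, the aim is to rewrite the right-hand side as the heat operator $H = 4\pi i \tfrac{\partial}{\partial \tau} + \tfrac{\partial^2}{\partial u^2}$ (or a constant multiple thereof) applied to a normalized version of $N^*(1,0,z;q)$. The nonmodular coefficients $(1+z)$ in the operator $2(1+z)\delta_q + z/2 + z\delta_z + (1+z)\delta_z^2/2$ have to be absorbed into a theta factor: observe that $(-z,-q/z)_\infty$, which already appears on the left-hand side of \eqref{pdestar}, equals (up to a power of $q$ and division by $(q)_\infty$) the theta function $\vartheta_0\!\left(u-\tfrac{\tau}{2};\tau\right)$ by the Jacobi triple product, and in particular vanishes precisely at $z=-1$. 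Pairing this factor against the $(1+z)$ coefficients should convert the right-hand side into the desired heat-operator form. By the framework surrounding Theorem \ref{maintheorem}, applying $H$ to the Lerch representation of $z^a q^b N^*(1,0,z;q)$ then yields a meromorphic Jacobi form.

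The remainder of the proof then parallels the classical rank case. Using Propositions \ref{ThetaProposition} and \ref{LerchProposition}, one checks that the reformulated left-hand and right-hand sides are periodic under $u\mapsto u+1$ and $u\mapsto u+\tau$ with matching multipliers; the triple pole produced by $H$ from the simple pole of $\mu$ at the lattice points is cancelled by $\vartheta(u;\tau)^3$, so the resulting function is holomorphic in $u$; hence it is either constant (as in the classical rank case) or a linear combination of $\vartheta_0(2u+\cdot;2\tau)$ and $\vartheta_1(2u+\cdot;2\tau)$ as in the proof of Theorem \ref{maintheorem}. Evaluating at $u=0$ (and, if necessary, taking one $u$-derivative and evaluating there), using the residue formula in Proposition \ref{LerchProposition}(3), pins down the coefficients and yields \eqref{pdestar}.

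The main obstacle is the second step: determining the correct powers $a,b$ and the precise way in which the $(1+z)$ factors pair with $(-z,-q/z)_\infty$ to turn the operator on the right of \eqref{pdestar} into a bona fide heat operator. Once that algebraic reshuffling is carried out, the elliptic/holomorphic argument runs exactly as in the classical case, giving the promised "more natural and shorter proof" of \eqref{pdestar}.
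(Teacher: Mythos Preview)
Your outline is broadly the paper's outline, but the step you correctly flag as ``the main obstacle'' is where you and the paper diverge, and your proposed mechanism for it does not work as stated. The $(1+z)$ factors sit \emph{inside} the differential operator, not as a multiplicative prefactor, so you cannot simply ``pair them against'' the theta factor $(-z,-q/z)_\infty$ on the left. The paper's key observation is different and much cleaner: one has the Lerch-sum representation
\[
N^{*}(1,0,z;q)=\frac{1}{1+z}\left(-2i\,\frac{\eta^{4}(2\tau)}{\eta^{2}(\tau)\,\vartheta(2u;2\tau)}-2i\,q^{-1/4}z\,\mu(2u,\tau;2\tau)+1\right),
\]
so that $N^{*}$ itself carries a prefactor $\frac{1}{1+z}$, and one checks the purely algebraic identity
\[
\Bigl(2(1+z)\delta_{q}+\tfrac{z}{2}+z\delta_{z}+\tfrac{1}{2}(1+z)\delta_{z}^{2}\Bigr)\,\frac{1}{1+z}=0.
\]
This annihilation, together with the Leibniz rule, is exactly what lets one trade the awkward operator on the right of \eqref{pdestar} for an honest heat operator $8\pi i\,\partial_\tau+\partial_u^{2}$ acting on the bracketed Lerch/theta combination (equation \eqref{pdestar2}). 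Without this trick you are stuck with $z$-dependent coefficients in your operator, and your absorption idea does not remove them.

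A second, smaller point: after multiplying by $\vartheta(u;\tau)^{3}$, the resulting holomorphic function is neither constant nor in the two-dimensional span of $\vartheta_0,\vartheta_1$ from Theorem~\ref{maintheorem}. Its elliptic transformation law is $g(u+1)=-g(u)$ and $g(u+\tau)=z^{-1}q^{-1/2}g(u)$, i.e.\ weight $4$ and index $\tfrac12$ with a sign, and that space is one-dimensional, spanned by $\vartheta\!\left(u+\tfrac12;\tau\right)$. So only a single evaluation at $u=0$ is needed to finish, exactly as in the classical rank case.
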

\begin{proof}
We first observe that 
$$
N^*(1,0,z;q)= \frac{1}{1+z}  \left( 2 \frac{  (-q)_{\infty}  }{   (q)_{\infty}     }  z S_1(z;q) +1\right)
$$
with 
$$
S_1(z;q):= \sum_{n \in \Z} \frac{ (-1)^n  q^{n^2+n}}{ 1-zq^n }.
$$
Using this we can compute that 
$$
N^*(1,0,z;q) 
= \frac{1}{1+z} 
\left(
-2 i \frac{ \eta^4(2 \tau)}{\eta^2(\tau) \vartheta(2u;2 \tau) }
- 2 i q^{ -\frac14} z \mu(2u, \tau;2 \tau)+1
\right).
$$
Since
\begin{equation} \label{addin}
\Bigl (2(1+z) \delta_{q} + \frac{z}{2}  + z\delta_{z} +
\frac{1}{2} (1+z) \delta_{z}^2 \Bigl ) 
\frac{1}{1+z}
=0,
\end{equation}
 it is not hard to see that  (\ref{pdestar}) is equivalent to  
\begin{equation}  \label{pdestar2}
  \vartheta^3(u;\tau) 
\left( 8 \pi i \frac{ \partial}{ \partial \tau} + \frac{\partial^2}{\partial u^2 } \right)
\left(
\frac{\eta^4(2 \tau) }{\eta^2(\tau) \vartheta(2u;2 \tau)} 
+ q^{ -\frac14}z \mu(2u,\tau;2 \tau)
\right) 
= -4 \pi^2 \frac{ \eta^8(\tau)}{  \eta(2 \tau)}   \vartheta \left( u+\frac{1}{2};\tau \right).
\end{equation} 
We denote the  left-hand side by $g(u;\tau)$. Then $g(u;\tau)$ is a (meromorphic)
 Jacobi form of weight $4$ and index $\frac12$ satisfying 
\begin{eqnarray*}
g(u+1;\tau) &=& - g(u;\tau),\\
g(u+\tau;\tau)&=&z^{-1}q^{-\frac12}g(u;\tau).
\end{eqnarray*}
Considered as functions of $u$, the space of functions with these elliptic transformation properties is generated by $\vartheta\left( u+\frac12;\tau\right)$, hence
$$
g(u;\tau)= \vartheta\left( u+\frac12;\tau\right) h(\tau),
$$
where $h$ is a modular form of weight $\frac72$.  
As before we obtain the theorem by  plugging in $u=0$.  
\end{proof}
%%%
\subsection{M2-rank for overpartitions} 
We next consider in (\ref{generalCase})  the case $(d,e,q)=\left(1,1/q,q^2 \right)$. 
 
\begin{theorem}(see \cite{BLO})  \label{pde2}
We have  
\begin{multline} \label{pdeoverstar}
2z \left(q^2; q^2\right)_{\infty}^2  \left[C^{*}(z;q^2)\right]^3 \left(-z,-q/z  \right)_{\infty} 
=
 \Bigl (
(1+z)\delta_{q} + z + 2z\delta_{z} + (1+z)\delta_{z}^2 \Bigr)
N^{*}\left(1,1/q,z; q^2\right).
\end{multline}
\end{theorem}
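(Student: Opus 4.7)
The plan is to mirror the approach used in the proof of Theorem \ref{pde}. First, I would express $N^*(1,1/q,z;q^2)$ in a Lerch-plus-theta-quotient form analogous to the formula
$$
N^*(1,0,z;q) = \frac{1}{1+z}\left(-2i\frac{\eta^4(2\tau)}{\eta^2(\tau)\vartheta(2u;2\tau)} - 2iq^{-\frac{1}{4}}z\mu(2u,\tau;2\tau) + 1\right)
$$
used there. Standard manipulations of the basic hypergeometric series defining $N(1,1/q,z;q^2)$, together with a partial-fraction / Appell--Lerch decomposition along the lines of those used in \cite{BLO}, should yield a representation of $N^*(1,1/q,z;q^2)$ as $\frac{1}{1+z}$ times a sum of a single $\mu$-term at appropriately rescaled arguments, an explicit $\eta$--$\vartheta$ quotient, and an additive constant.

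Next, I would verify the annihilation identity
$$
\left((1+z)\delta_q + z + 2z\delta_z + (1+z)\delta_z^2\right)\frac{1}{1+z} = 0,
$$
which is the analog of \eqref{addin} for the new differential operator. This allows one to discard the constant summand in the Lerch representation and reduces \eqref{pdeoverstar} to an equivalent heat-operator equation of the shape
$$
\vartheta(u;\tau)^3\left(c\pi i\frac{\partial}{\partial\tau} + \frac{\partial^2}{\partial u^2}\right)\bigl(\text{Lerch combination}\bigr) = H(\tau)\,\vartheta\!\left(u+\tfrac{1}{2};\tau\right),
$$
for a constant $c$ dictated by the index of the resulting Jacobi form and an unknown modular form $H$.

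At this point the triple poles of the heat operator applied to the $\mu$-term are cancelled by the factor $\vartheta(u;\tau)^3$, so the left-hand side $g(u;\tau)$ is holomorphic in $u$. Computing $g(u+1;\tau)$ and $g(u+\tau;\tau)$ via Propositions \ref{ThetaProposition} and \ref{LerchProposition} yields its elliptic transformation law; I expect $g$ to be a meromorphic Jacobi form whose space, viewed as a function of $u$, is one-dimensional and spanned by $\vartheta(u+\frac{1}{2};\tau)$, exactly as in the proof of Theorem \ref{pde}. Writing $g(u;\tau) = H(\tau)\,\vartheta(u+\frac{1}{2};\tau)$ and evaluating at $u = 0$, using Proposition \ref{ThetaProposition}(4) together with the residue of $\mu$ at the origin, then determines $H(\tau)$ explicitly and yields \eqref{pdeoverstar}.

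The main obstacle will be finding the correct Lerch-sum representation of $N^*(1,1/q,z;q^2)$: the substitution $q \mapsto q^2$ combined with the nontrivial value $e = 1/q$ complicates the $q$-series combinatorics, and one must ensure that the $\mu$-arguments emerge in a form compatible with the strategy above so that the index and quasi-periodicities of $g$ work out correctly. Once that representation is in hand, the remainder of the argument is a direct adaptation of the proof of Theorem \ref{pde}, amounting essentially to bookkeeping of weights, indices, and the leading Laurent coefficient at $u = 0$.
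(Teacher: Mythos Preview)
Your plan follows the paper's route, but two of your expectations are off in ways that matter. First, the Lerch representation is simpler than you anticipate: one finds
\[
N^*\!\left(1,\tfrac{1}{q},z;q^2\right) = \frac{1}{1+z}\left(-2i\,z^{1/2}q^{-1/4}\,\mu(u,\tau;2\tau) + 1\right),
\]
with a single $\mu$-term (first argument $u$, not $2u$, and modular parameter $2\tau$) and \emph{no} $\eta$--$\vartheta$ quotient term. Second, and more significantly, because the Lerch sum has modular parameter $2\tau$, its poles in $u$ lie in $\Z\cdot 2\tau+\Z$, so the correct cancelling factor is $\vartheta(u;2\tau)^3$, not $\vartheta(u;\tau)^3$. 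The resulting holomorphic function $g_2$ then satisfies
\[
g_2(u+1;\tau)=g_2(u;\tau),\qquad g_2(u+2\tau;\tau)=z^{-2}q^{-2}\,g_2(u;\tau),
\]
and the space of such functions of $u$ is \emph{two}-dimensional, spanned by $\vartheta(u;\tau)$ and $\vartheta\!\left(u+\tfrac12;\tau\right)$, not one-dimensional as in Theorem~\ref{pde}. An additional parity argument is needed: $\vartheta(u;\tau)$ is odd and $\vartheta\!\left(u+\tfrac12;\tau\right)$ is even, and one checks that $g_2$ is even in $u$, so only the even basis element contributes. After that, evaluating at $u=0$ proceeds as you describe. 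So the strategy is right, but the ``exactly as in Theorem~\ref{pde}'' expectation would lead you astray at precisely the step where you identify $g_2$.
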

\begin{proof}
We first observe that 
$$
N^* \left(1,\frac{1}{q},z;q^2\right)= 
\frac{1}{1+z}  \left( 2 \frac{  (-q)_{\infty}  }{   (q)_{\infty}     }  z S_2(z;q) +1\right)
$$
with 
$$
S_2(z;q):= \sum_{n \in \Z} \frac{ (-1)^n  q^{n^2+2n}}{ 1-zq^{2n} }.
$$
Using this we can compute that 
$$ 
N^* \left(1,\frac{1}{q},z;q^2\right)
= \frac{1}{1+z} 
\left(
-2 i  z^{\frac12}        q^{ -\frac14}  \mu(u, \tau;2 \tau)+1
\right).
$$
Using (\ref{addin}) 
  it is not hard to see that  (\ref{pdeoverstar}) is equivalent to  
\begin{equation}  \label{pde3}
 \vartheta^3(u;2 \tau)
\left( 2  \pi i \frac{ \partial}{ \partial \tau} + \frac{\partial^2}{\partial u^2 } \right)
\left(  q^{ -\frac14}z^{\frac12}  \mu(u,\tau;2 \tau)
\right) 
= -4 \pi^2\, \frac{ \eta^8(2\tau)}{  \eta(\tau)}  \vartheta \left( u+\frac{1}{2};\tau \right) .
\end{equation}  
Denote the left hand side by  $g_2(u;\tau)$. Then $g_2(u;\tau)$ is a Jacobi form satisfying
\begin{eqnarray*}
g_2(u+1;\tau)&=&g_2(u;\tau),\\
g_2(u+2 \tau;\tau)&=&z^{-2}\, q^{-2}\, g_2(u;\tau).
\end{eqnarray*} 
The space of these forms (considered as functions of $u$) is $2$-dimensional and $\vartheta(u;\tau)$ and 
$\vartheta\left(u+\frac12;\tau \right)$ are linearly independent elements in this space. 
As functions of $u$, they are odd and even, respectively. Since $g_2$ is even, only $\vartheta\left(u+\frac12;\tau \right)$ occurs. Plugging in $u=0$ yields the desired relation. 
\end{proof}
\subsection{M2-rank for partitions without repeated odd parts}
We next  consider the case $(d,e,q)=\left( 0,1/q,q^2 \right)$.   
\begin{theorem} (see \cite{BLO})
We have
\begin{equation} \label{pdem2star}
2z \frac{(q^2; q^2)_{\infty}^2}{(-q; q^2)_{\infty}}
\left[C^{*}\left(z;q^2\right)\right]^3 
\left( -zq,-q/z;q^2\right)_{\infty} 
 = \Bigl ( 2\delta_{q} + \delta_{z} +
\delta_{z}^2 \Bigr ) N^{*}\left(0,1/q,z; q^2\right).
\end{equation}
\end{theorem}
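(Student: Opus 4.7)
The plan is to adapt the template used in the proofs of Theorems \ref{pde} and \ref{pde2}. The first step is to recast $N^*(0,1/q,z;q^2)$ in terms of the Lerch function $\mu$. Starting from the defining expansion
\[
N(0,1/q,z;q^2) = \sum_{n\ge 0}\frac{(-q;q^2)_n\,q^{n^2}}{(zq^2,q^2/z;q^2)_n}
\]
and using the bilateral Hecke-type rewriting underlying \cite{BG}, one expects an expression of the form
\[
N^*(0,1/q,z;q^2) = T(u,\tau) + (\text{modular factor})\cdot z^{1/2}q^{-c}\,\mu\bigl(u,v(\tau);\gamma\tau\bigr),
\]
where $T$ is annihilated by $2\delta_q+\delta_z+\delta_z^2$ and $\gamma\in\{1,2\}$.

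A short computation shows that $2\delta_q+\delta_z+\delta_z^2$ is conjugate, via multiplication by the character $z^{-1/2}q^{1/8}$, to a constant multiple of the heat operator $H=4\pi i\,\partial_\tau+\partial_u^2$:
\[
(2\delta_q+\delta_z+\delta_z^2)\bigl(z^{-1/2}q^{1/8}\,F\bigr) = -\frac{1}{4\pi^2}\,z^{-1/2}q^{1/8}\,HF.
\]
Thus \eqref{pdem2star} is equivalent to an identity of the shape $HG = (\text{explicit theta-eta expression})$, where $G$ carries the Lerch piece of $N^*$. Multiplying by $\vartheta(u;\gamma\tau)^3$ cancels the triple pole at $u\in\Z\tau+\Z$ created by $H$ acting on the simple pole of $\mu$, so the left-hand side becomes a holomorphic Jacobi form in $u$. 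Reading off its elliptic transformation laws under $u\mapsto u+1$ and $u\mapsto u+\tau$ from Propositions \ref{ThetaProposition} and \ref{LerchProposition} confines it to a one-dimensional space spanned by a suitable theta translate; this matches the theta-type product $(-zq,-q/z;q^2)_\infty$ on the right-hand side of \eqref{pdem2star}.

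The remaining modular scalar is then pinned down by evaluating at $u=0$, using Proposition \ref{LerchProposition}(3) for the residue of $\mu$ together with $\vartheta'(0;\tau)=-2\pi\eta^3(\tau)$ from Proposition \ref{ThetaProposition}(4). The main obstacle is Step 1: producing the precise Lerch-sum reformulation for $N(0,1/q,z;q^2)$ and identifying the correct values of $c$, $v(\tau)$, and $\gamma$. Once that is done, the remaining elliptic and analytic arguments proceed on autopilot, mirroring the proofs of Theorems \ref{pde} and \ref{pde2} with only obvious modifications in weight, index, and choice of theta basis element.
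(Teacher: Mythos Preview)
Your overall template is right, and you correctly identify that everything hinges on Step~1. However, your ansatz for the Lerch reformulation is wrong in a way that would derail the rest of the argument. In the paper's proof one has
\[
N^*\!\left(0,\tfrac{1}{q},z;q^2\right)
= -i\,\mu(2u,\tau;4\tau) - i\,q^{-1}z\,\mu(2u,3\tau;4\tau) + 1,
\]
i.e.\ \emph{two} Lerch sums, each with argument $2u$ and modular parameter $4\tau$, not a single $\mu(u,v;\gamma\tau)$ with $\gamma\in\{1,2\}$. The pair is forced: each $\mu(2u,\cdot;4\tau)$ individually has poles along $u\in 2\Z\tau+\tfrac12\Z$, whereas $N^*(0,1/q,z;q^2)$ has poles only on $2\Z\tau+\Z$; the half-integer poles must cancel between the two terms. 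A single-$\mu$ ansatz cannot reproduce this pole structure, so your ``autopilot'' elliptic argument would not get off the ground.

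With the correct decomposition, the heat conjugation you wrote does apply, and the equivalent identity becomes
\[
\vartheta(u;2\tau)^3\left(4\pi i\,\partial_\tau+\partial_u^2\right)
\Bigl(z^{1/2}q^{-1/8}\mu(2u,\tau;4\tau)+z^{3/2}q^{-9/8}\mu(2u,3\tau;4\tau)\Bigr)
= -8\pi^2\,\eta(\tau)\eta(2\tau)^5\eta(4\tau)\,z^{1/2}q^{1/4}\vartheta\!\left(u+\tfrac12+\tau;2\tau\right).
\]
Note that the theta factor is $\vartheta(u;2\tau)^3$, not $\vartheta(u;4\tau)^3$: the poles in $u$ live on the lattice $2\Z\tau+\Z$. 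The paper then substitutes $\tau\mapsto\tau/2$ so that the resulting $g_3$ satisfies $g_3(u+1)=g_3(u)$ and $g_3(u+2\tau)=z^{-1}q^{-1/2}g_3(u)$; this space of holomorphic functions is one-dimensional, spanned by $z^{1/2}q^{1/4}\vartheta(u+\tfrac12+\tau;2\tau)$, and evaluating at $u=0$ fixes the constant. The theta basis element carries a shift by $\tau$ as well as by $\tfrac12$, reflecting the product $(-zq,-q/z;q^2)_\infty$ on the right of \eqref{pdem2star} rather than $(-z,-q/z)_\infty$.
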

\begin{proof}
We use that 
$$
N^* \left(0,\frac{1}{q},z;q^2\right)= 
  \frac{  \left(-q;q^2 \right)_{\infty}  }{   \left(   q^2;q^2 \right)_{\infty}     }  z S_3(z;q) +1
$$
with 
$$
S_3(z;q):= 
\sum_{n \in \Z} \frac{ (-1)^n  q^{2n^2+3n}}{ 1-zq^{2n} }.
$$
This easily gives   that 
$$ 
N^* \left(0,\frac{1}{q},z;q^2\right)
= 
- i     \mu(2u, \tau;4 \tau)      - i     q^{-1} z \mu(2u, 3\tau;4 \tau)       +  1.
$$
Using this it is not hard to see that  (\ref{pdem2star}) is equivalent to  
\begin{multline}  \label{pde4}
\vartheta^3(u;2 \tau)
\left( 4  \pi i \frac{ \partial}{ \partial \tau} + \frac{\partial^2}{\partial u^2 } \right)
\left( 
z^{\frac12} \, q^{-\frac18}      \mu(2u, \tau;4 \tau)      +     q^{-\frac98} z^{\frac32}  \mu(2u, 3\tau;4 \tau)     \right)  \\ 
= - 8 \pi^2 \eta(\tau) \, \eta^5(2 \tau)\, \eta(4 \tau) z^{\frac12}\, q^{\frac14} \vartheta \left( u+\frac{1}{2}+\tau; 2 \tau \right).
\end{multline} 
We now change $\tau\to \frac{\tau}{2}$ and denote the new left hand side by $g_3(u;\tau)$. 
Then  
\begin{eqnarray*}
g_3(u+1;\tau)&=&g_3(u;\tau),\\
g_3(u+2 \tau;\tau)&=&z^{-1}\, q^{-\frac12}\, g_3(u;\tau).
\end{eqnarray*}
Since this space is $1$-dimensional and 
$z^{\frac12}q^{\frac14}\, \vartheta \left( u+\frac12+\tau;2 \tau\right)$ lies in that space the claim follows as before. 
\end{proof}

%%%
%%
\section{Proof of Theorem \ref{CongruenceTheorem}}   \label{CongruenceSection}
Here we give a sketch of the proof of an infinite family of congruences for $\eta_{k}(n)$, with $k$ even.
We define for even $k$  its generating function 
$$
R_{k}^o(q):=\sum_{n =0}^{\infty}\eta_{k}^o(n)\, q^n.
$$
Theorem \ref{CongruenceTheorem} is shown once we know that  
 the restriction of $R_{k}^o(q)$ to certain residue classes
  is a quasi-modular form.   
  By work of Serre (see \cite{Se}),
   quasi-modular forms are $p$-adic modular forms and thus one obtains infinite classes of congruences by work of Treneer (see \cite{Tr}).
Using   that 
$$
(q)_{\infty} = \sum_{n \in \Z} (-1)^n\, q^{   \frac{ 3n^2+n}{2 } },
$$
it is not hard to see 
 that 
$$
R^o(z;q) = z^{-1} \left( R^*\left(zq;q^2\right)-1 \right).
$$ 
Using this one can show that 
\begin{equation}\label{oddpart}
z^{-1} q^{-\frac13} \left(             zR^o(z;q)  + 1              \right)
\end{equation}
is the holomorphic part of a (non-holomorphic)  Jacobi form. 
Next we can conclude from (\ref{Diff2})  that 
\begin{multline} \label{oddPDE}
\left( 6 \pi i \frac{\partial}{\partial \tau}  +\frac{\partial^2}{ \partial u^2} \right) 
\left(  q^{-\frac13 } R^o(z;q) \right)
=  \left( 6 \pi i \frac{\partial}{\partial \tau}  +\frac{\partial^2}{ \partial u^2} \right) 
\left( z^{-1}\, q^{-\frac13 } 
\left(z 
R^o(z;q) +1 \right) \right)\\ 
= z^{-\frac32} q^{-\frac34}
  \left( 12 \pi i \frac{\partial}{\partial \tau}  +\frac{\partial^2}{ \partial u^2} \right) 
\left.   \left( 
  z^{\frac12} \, q^{-\frac{1}{24}} R^*(z;q)
  \right)
  \right|_{\tau\rightarrow 2 \tau, u \rightarrow u + \tau}
\\
= - \frac{8 \pi^2 i \eta^8(2 \tau)}{\vartheta^3 (u+\tau;2 \tau)q^{ \frac34 }  \, z^{ \frac32}    }.
\end{multline}
We denote the right-hand side by $g(u;\tau)$. It is not hard to see that  differentiating $2 \ell$ times and then setting $u=0$ 
yields a  linear combination of quasimodular forms which we call   $g_{\ell}(\tau)$. 
Moreover we let 
$$
\Psi(u;\tau):=
\left( z^{-1}\, q^{-\frac13 } 
\left(zR^o(z;q) +1 \right) \right),
$$
which is the   holomorphic part of a non-holomorphic  Jacobi form.
Again we differentiate   $2\ell$ times with respect to $u$ and then set $u=0$.
We call this function  $\Psi_{\ell}(\tau)$. 
Using \cite{Zw2}, one can show that the  holomorphic part is supported on certain fixed arithmetic progressions (we could make this set explicit as in \cite{BO}). 
We call the compliment of this set $\mathcal{S}_p$. 
Then the restriction of $\Psi_{\ell}(\tau)$  to $\mathcal{S}_p$ is   a quasimodular form.

Differentiating (\ref{oddPDE}) $2\ell$ times gives
$$
\Psi_{\ell+1}(\tau) 
= -6 \pi i \frac{\partial }{ \partial \tau} 
\Psi_{\ell}(\tau) 
+    g_{\ell}(\tau).
$$
Inductively we can now argue that the restriction of $\Psi_{\ell}(\tau)$ to $\mathcal{S}_p$ is a  linear combination of quasimodular forms. 
From this one can conclude that also the restriction of $R_{k}^o$ to 
$\mathcal{S}_p$ is a linear combination of quasimodular forms.  Now we can argue as in \cite{BGM}.
\begin{remark}
In a similar way one could also consider shifted versions in the other functions occurring  in Section 
\ref{SectionExamples}. Since the combinatorics of these functions are not investigated yet, we chose to not address this subject here. 
\end{remark}

 \end{document}